\pdfoutput=1
\documentclass[11pt]{amsart}

\usepackage[a4paper,margin=1in]{geometry}
\usepackage{amsmath,amssymb,amsthm,mathtools}
\usepackage{enumitem}
\usepackage[hidelinks]{hyperref}
\usepackage{cleveref}
\usepackage{microtype}
\usepackage{booktabs}
\usepackage{array}
\usepackage{tikz}
\usetikzlibrary{arrows,positioning,calc,fit,shapes.geometric}

\title{Popular Differences and the Croot--Lev Half-Threshold Problem}

\author[J. Hou]{Jianfeng Hou}
\address{Fuzhou University, Fuzhou, Fujian, China}
\email{jfhou@fzu.edu.cn}

\author[W. Li]{Wei Li}
\address{Fujian Agriculture and Forestry University, Fuzhou, Fujian, China}
\email{liwei@fafu.edu.cn}

\author[K. Yang]{Kai Yang}
\address{Fuzhou University, Fuzhou, Fujian, China}
\email{443926471@qq.com}
\thanks{Corresponding author: Kai Yang.}

\date{}

\newtheorem{theorem}{Theorem}[section]
\newtheorem{lemma}[theorem]{Lemma}
\newtheorem{proposition}[theorem]{Proposition}
\newtheorem{corollary}[theorem]{Corollary}
\newtheorem{problem}[theorem]{Problem}
\theoremstyle{definition}
\newtheorem{definition}[theorem]{Definition}
\newtheorem{example}[theorem]{Example}
\theoremstyle{remark}
\newtheorem{remark}[theorem]{Remark}

\DeclareMathOperator{\Stab}{Stab}

\newcommand{\F}{\mathbb F}

\begin{document}

\begin{abstract}
Let $A$ be a finite non-empty subset of an abelian group $G$, and let
$r_A(d)=|\{(a,a')\in A^2:a-a'=d\}|$. Croot and Lev asked whether the
pointwise half-threshold condition
$r_A(d)\ge |A|/2$ for every $d\in A-A$ forces $A-A$ to be either a
subgroup or a union of three cosets. We resolve this open problem in its
sharp general form by identifying the essential obstruction: the statement
is false in arbitrary abelian groups, but becomes true after excluding
non-zero two-torsion. More precisely, if $G$ is two-torsion-free and the
half-threshold condition holds, then either $A-A$ is a finite subgroup of
$G$, or there are a finite subgroup $H\le G$ and elements $x,g\in G$ such
that
\[
    A=(x+H)\cup(x+g+H).
\]
The two-torsion-free hypothesis is
essential: for every $r\ge1$ we construct
$A\subseteq\F_2^{2r+1}$ with
$A-A=\F_2^{2r+1}\setminus\{t\}$ such that every non-zero represented
difference has exactly $|A|/2$ representations, giving genuine
counterexamples to the Croot--Lev conclusion. The proof of the positive
result combines a Kneser quotient reduction with Lev's formulation of
Kemperman's critical-pair theory.
\end{abstract}

\maketitle

\noindent\textbf{Keywords.} difference sets; popular differences; 
representation functions; Kneser's theorem; critical pairs.

\section{Introduction}

Let $G$ be an abelian group, and let $A$ be a finite non-empty subset of $G$. Write
\[
G[2]=\{g\in G:2g=0\}.
\]
Thus $G[2]=\{0\}$ means that $G$ has no non-zero element of order two; in this case, we say that $G$ is \emph{$2$-torsion-free}. Let  $D:= A-A=\{a-a':a,a'\in A\}$ be \emph{difference set} of $A$. The \emph{difference representation function}~\cite{Nathanson,NathansonRep} $r_A$ of $A$ is defined on $D$ by 
\[
    r_A(d)=\left|\{(a,a')\in A^2:a-a'=d\}\right|, \qquad d\in D.
\]
Clearly, \(r_A(d)\) measures exactly how much the translate \(A+d\) overlaps
with \(A\). The first
two global pieces of information carried by \(r_A\) are
\[
    \sum_{d\in G} r_A(d)=|A|^2,
    \qquad
    E(A)=\sum_{d\in G}r_A(d)^2.
\]
The first identity gives the averaging relation between the size of
\(D\) and the multiplicities of its differences. The second quantity is
the additive energy, which is one of the standard signals of additive structure \cite{BalogSzemeredi,Gowers,GreenRuzsa,RuzsaSurvey,ShkredovHigherEnergies}

When studying whether sumsets or difference sets contain large linear subspaces~\cite{GreenFiniteField}, one often needs to consider the ``popular'' elements $d\in D$, namely those for which $r_A(d)$ is large. Wolf~\cite{WolfPopular} studied the structure of popular difference sets in this direction, while Sanders~\cite{SandersPopular} explained the relevance of such constructions to the problem of finding large subspaces in sumsets. Popular differences are also closed related to generalized Sidon sets \cite{KonyaginLevPopular,XuPopular}.

In this paper, we concern an extremal inverse problem in which every represented difference is assumed to be popular. Croot and Lev formulated the following question in their open-problem collection \cite[Problem~7.14]{CrootLev}.

\begin{problem}[Croot--Lev]\label{prob:croot-lev}
Let $A$ be a finite non-empty subset of an abelian group $G$. Suppose that
\begin{equation}\label{eq:half-popular}
    r_A(d)\ge \frac{|A|}{2}\qquad\text{for every }d\in D
\end{equation}
holds. Must $D$ be either a subgroup of $G$ or a union of three cosets
of a subgroup of $G$?
\end{problem}

In the same paper, Croot and Lev observed that the strict supercritical version has a simple answer: if $r_A(d)>|A|/2$ for every $d\in D$, then $D$ is a subgroup. 
However, Problem~\ref{prob:croot-lev} is not true for general groups; we provide counterexamples in Section~\ref{sec:counterexamples-remarks}. 
On the other hand, we observe that the $2$-torsion-free assumption is indispensable. 
Indeed, Problem~\ref{prob:croot-lev} holds for all $2$-torsion-free abelian groups.

\begin{theorem}\label{thm:main}
Let $G$ be a $2$-torsion-free abelian group, and let $A\subseteq G$ be
finite and non-empty.  Suppose that
\[
    r_A(d)\ge \frac{|A|}{2}\qquad\text{for every }d\in D.
\]
Then either $D$ is a finite subgroup of $G$, or there exist a finite
subgroup $H\le G$ and elements $x,g\in G$ such that
\begin{equation}\label{eq:A-two-cosets}
    A=(x+H)\cup(x+g+H),
\end{equation}
and the image of $g$ in $G/H$ has order different from $1$, $2$, and $3$.
In the second case
\begin{equation}\label{eq:D-three-cosets}
    D=(-g+H)\cup H\cup(g+H). 
\end{equation}
\end{theorem}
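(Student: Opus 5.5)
The plan is to reduce to the quotient by the stabilizer of $D$ and then classify the remaining small configurations with Kemperman's critical-pair theory in Lev's formulation, treating the first case directly and the second via the paper's description in terms of $D$ as a union of three cosets.

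\textbf{Counting and the Kneser reduction.} Write $n=|A|$. Summing the hypothesis over $D$ gives
\[
n^2=\sum_{d\in D}r_A(d)\ge |D|\,n/2,
\]
so $|D|\le 2n$. Let $H=\Stab(D)=\{h: D+h=D\}$. Kneser's theorem applied to $A+(-A)$ gives
\[
|D|\ge 2|A+H|-|H|.
\]
If $A$ meets $k$ cosets of $H$, this reads $|D|\ge (2k-1)|H|$.

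\textbf{Case $k=1$.} Here $A$ lies in a single coset $x+H$ and $D$ is a union of $H$-cosets contained in $H$, so $D=H$ is a finite subgroup. This is the first alternative.

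\textbf{Setup in the quotient for $k\ge2$.} We pass to $\bar G=G/H$. There $\bar D=\bar A-\bar A$ has trivial stabilizer and $|\bar D|=|D|/|H|$. For $\bar d\in\bar D$ the fibre counts give
\[
\sum_{d\in \bar d}r_A(d)=\sum_{\bar a-\bar a'=\bar d}|A\cap \bar a|\,|A\cap\bar a'|.
\]
Since every $d$ in the coset $\bar d$ lies in $D$, this sum is at least $|H|\,n/2$.

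\textbf{The aperiodic, near-critical situation.} Combining $|D|\le 2n\le 2k|H|$ with $|D|\ge(2k-1)|H|$ leaves $|\bar D|\in\{2k-1,2k\}$. Moreover $A$ occupies almost all of each coset it meets. The pair $(\bar A,-\bar A)$ then satisfies
\[
|\bar A-\bar A|\le|\bar A|+|{-\bar A}|
\]
with $\bar A-\bar A$ aperiodic, which is exactly the range covered by Kemperman's structure theorem in Lev's formulation. That theorem says the pair is elementary (arithmetic progressions with a common difference, or one set of size one, or the aperiodic near-subgroup types) or arises from a quasi-periodic decomposition. We iterate the decomposition, feeding the popularity inequality back into each piece to rule out quasi-periodic pieces. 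Along the way we use $2$-torsion-freeness: a symmetric set $\bar D$ has $0$ as its only self-negating element, so $|\bar D|$ is odd, which forces $|\bar D|=2k-1$. It also forbids the Kemperman type in which $\bar D$ is a coset of a subgroup of index two minus a point, which is exactly the $\F_2$-type phenomenon behind the counterexamples.

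\textbf{Main obstacle: arithmetic progressions.} The remaining case is $\bar A$ an arithmetic progression $\{\bar x,\bar x+\bar g,\dots,\bar x+(k-1)\bar g\}$. Then $\bar D$ is the progression $\{-(k-1)\bar g,\dots,(k-1)\bar g\}$, and the extreme difference $(k-1)\bar g$ has fibre sum only about $|A\cap\bar x|\cdot|A\cap(\bar x+(k-1)\bar g)|$ when $k\ge2$. The hypothesis then requires this product to be at least $|H|n/2\approx k|H|^2/2$, which forces $k\le2$. With $k=2$, the inequality $|A\cap c_1|\,|A\cap c_2|\ge |H|(|A\cap c_1|+|A\cap c_2|)/2$ forces both intersections to be full cosets, so
\[
A=(x+H)\cup(x+g+H).
\]
Then $D=(-g+H)\cup H\cup(g+H)$, which is \eqref{eq:D-three-cosets}. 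Finally, $\bar g\ne0$ because $k=2$. If $\bar g$ had order $2$ or $3$, then $D$ would be a subgroup ($H\cup(g+H)$ or $\langle g\rangle+H$) and we would be in the first alternative; note order $2$ is already excluded when $G$, hence the relevant part of $G/H$, has no $2$-torsion issues, since $H$ finite. This gives the order condition in the statement.

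The hardest step is the case analysis of the Kemperman decomposition. Showing that no genuinely quasi-periodic configuration survives the pointwise half-threshold needs a careful weighted version of the inequality above on each coset of the quasi-period. I would try induction on $|\bar D|$, applying the same argument to the sub-pair in the last coset.
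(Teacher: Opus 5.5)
Your preliminary steps are correct, and two of them are good alternatives to the paper. First, a finite symmetric subset of the $2$-torsion-free group $G/H$ has odd size, so your parity remark upgrades $|\bar D|\in\{2k-1,2k\}$ to $|\bar D|=2k-1$; the paper gets the same thing by sharpening the count to $|D|\le 2|A|-1$ via $r_A(0)=|A|$. Second, your inequality $st\ge h(s+t)/2$ for the uniquely represented quotient difference correctly forces two full $H$-cosets when $k=2$, replacing \Cref{prop:two-fibres-full}.

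The gap is the step that carries the theorem: ruling out $k\ge3$. You do this only when $\bar A$ is itself an arithmetic progression in $G/H$, i.e.\ when the subgroup supplied by Lev's theorem is trivial and the pair is of type II. In general \Cref{thm:lev-reduction} gives a finite proper subgroup $L$ of $G/H$ such that only the image $C$ of $\bar A$ in $(G/H)/L$ is elementary, and $\bar A$ is an arbitrary union of pieces of $L$-cosets. Two cases are not treated at all: $|C|=1$ (all of $\bar A$ in one $L$-coset) and type III. The ``iterate the decomposition, induction on $|\bar D|$'' plan is never carried out. Nor does your extreme-difference computation finish the job when $L\neq\{0\}$. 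There the end fibres $X,Y\subseteq\bar A$ over the uniquely represented element of $C-C$ contain several $H$-cosets. A per-element count then gives only $|X|=|Y|=k/2$ and $\bar A=X\cup Y$, not $k\le 2$.

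The paper supplies the missing ingredients as follows.
\begin{enumerate}[label=\textup{(\arabic*)}]
\item Turn the pointwise hypothesis into an exact quotient statement. The bound $r_A(d)\le h\,r_{\bar A}(\xi)$ and your hole bound give $r_{\bar A}(\xi)\ge\lceil k/2\rceil$. Averaging over the $2k-2$ non-zero elements of $\bar D$ then forces $k$ even and $r_{\bar A}(\xi)=k/2$ for every $\xi\neq0$.
\item This uniform property is inherited when $\bar A$ is viewed inside a subgroup. So $|C|=1$ is handled by induction on the order of a finite odd-order group; when $G$ is infinite, first pass to the finite subgroup $L$, which has odd order.
\item If $C-C$ has a non-zero uniquely represented element (types II and III), the fibre argument gives $|X|=|Y|=k/2$. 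Then $X=Y+d_0$ and $r_X(s)=|X|$ for all $s\in X-X$. So $X$ is a coset of $K=X-X$ and $\bar D$ is $K$-periodic, and aperiodicity forces $K=\{0\}$ and $k=2$.
\item Type IV is excluded because it yields a subgroup of order $2|C|$ in the $2$-torsion-free group $(G/H)/L$.
\end{enumerate}
Without an argument of this kind covering $|C|=1$ and every elementary type, the proposal does not establish the theorem.
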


For finite abelian groups, $2$-torsion-free is equivalent to having odd order. Thus \Cref{thm:main} gives the following direct answer to the Problem~\ref{prob:croot-lev} in odd-order groups.

\begin{corollary}\label{cor:odd-order}
Let $G$ be a finite abelian group of odd order, and let $A\subseteq G$ be
non-empty. If \eqref{eq:half-popular} holds, then either $A-A$ is a
subgroup of $G$, or $A$ is the union of two cosets of a subgroup $H\le G$.
In the latter case $A-A$ is precisely the three-coset set
\eqref{eq:D-three-cosets}; the image of $g$ in $G/H$ has odd order at
least five.
\end{corollary}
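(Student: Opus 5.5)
The corollary follows from \Cref{thm:main} once we check the hypotheses and translate the conclusion. First, for a finite abelian group $G$, $G[2]=\{0\}$ exactly when $|G|$ is odd. If $|G|$ is even, Cauchy's theorem gives an element of order $2$. If $|G|$ is odd, then $2g=0$ forces the order of $g$ to divide both $2$ and $|G|$, so $g=0$. Hence $G$ is $2$-torsion-free. Every subset of $G$ is finite, so \Cref{thm:main} applies to any non-empty $A\subseteq G$ satisfying \eqref{eq:half-popular}.

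We then read off the two alternatives. In the first, $D=A-A$ is a finite subgroup of $G$, which is simply a subgroup since $G$ is finite. In the second, \eqref{eq:A-two-cosets} gives $A=(x+H)\cup(x+g+H)$, so $A$ is a union of two cosets of $H$. These cosets are distinct because the image of $g$ in $G/H$ is non-trivial. Moreover, $D$ is the three-coset set \eqref{eq:D-three-cosets}, exactly as stated in \Cref{thm:main}.

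It remains to check the order of $g+H$ in $G/H$. Since $|G/H|$ divides $|G|$, it is odd, so by Lagrange the order of $g+H$ is odd. \Cref{thm:main} excludes the orders $1,2,3$, and the smallest odd integer not excluded is $5$. So the order is odd and at least five.

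None of these steps is a real obstacle; the substantive work all lies in \Cref{thm:main}. The only point needing an argument is the equivalence between $2$-torsion-freeness and odd order, which is handled by Cauchy's theorem and Lagrange's theorem as above.
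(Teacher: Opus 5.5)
Your proposal is correct and matches the paper, which obtains this corollary directly from \Cref{thm:main} using the fact that a finite abelian group is $2$-torsion-free exactly when its order is odd. Your Lagrange argument that the order of $g+H$ divides the odd number $|G/H|$, and so is at least five once $1,2,3$ are excluded, supplies the one detail the paper leaves implicit.
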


The remainder of this paper is organized as follows. We collect some useful lemmas in \Cref{sec:preliminaries}, and prove Theorem \ref{thm:main} in \Cref{sec:proof-main}. The characteristic-two counterexamples and the final
remarks are collected in \Cref{sec:counterexamples-remarks}.

\section{Preliminaries}\label{sec:preliminaries}

We begin with two elementary observations. The first is the strict version
mentioned by Croot and Lev~\cite[Problem~7.14 and the following
discussion]{CrootLev}.

\begin{proposition}\label{prop:strict}
Let $A$ be a finite non-empty subset of an abelian group $G$. If
\[
    r_A(d)>\frac{|A|}{2}\qquad\text{for every }d\in D,
\]
then $D$ is a subgroup of $G$.
\end{proposition}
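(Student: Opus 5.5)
We will show directly that $D$ is closed under addition; it already contains $0$, it is finite, and $D=-D$, so closure under addition makes it a finite subgroup of $G$.

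Fix $d_1,d_2\in D$ and set
\[
S_1=\{a\in A: a-d_1\in A\}=A\cap(A+d_1),\qquad S_2=\{a\in A: a+d_2\in A\}=A\cap(A-d_2).
\]
Then $|S_1|=r_A(d_1)$ and $|S_2|=r_A(d_2)$, since each pair $(a,a')$ with $a-a'=d$ is determined by its first (respectively second) coordinate. By hypothesis both sizes exceed $|A|/2$. Both sets lie inside $A$, so
\[
|S_1\cap S_2|\ge |S_1|+|S_2|-|A|>0.
\]
Pick $a\in S_1\cap S_2$. Then $a-d_1\in A$ and $a+d_2\in A$, hence
\[
d_1+d_2=(a+d_2)-(a-d_1)\in A-A=D.
\]
Thus $D+D\subseteq D$, and with $0\in D$ and $-D=D$ this makes $D$ a subgroup. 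It is finite because $A$ is.

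The only point needing care is getting the signs right, so that the two sets $S_1,S_2$ both sit inside $A$ and the pigeonhole argument applies. Strictness is used precisely to make the intersection non-empty.
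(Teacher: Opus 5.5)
Your proof is correct and is essentially the paper's argument: both intersect two subsets of $A$ of size $r_A(\cdot)>|A|/2$ and read off the new difference from a common element. The only cosmetic difference is that the paper proves closure under subtraction using $\{a\in A: a+x\in A\}$ and $\{a\in A: a+y\in A\}$, whereas you prove closure under addition by flipping the sign in one of the two sets.
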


\begin{proof}
It is enough to show that $D$ is closed under subtraction, since $0\in D$
and $D=-D$. Let $x,y\in D$, and define
\[
    T_x=\{a\in A:a+x\in A\},\qquad
    T_y=\{a\in A:a+y\in A\}.
\]
Then $|T_x|=r_A(x)>|A|/2$ and $|T_y|=r_A(y)>|A|/2$. Hence
$T_x\cap T_y\ne\emptyset$. Choose $a\in T_x\cap T_y$. Then
$a+x,a+y\in A$, and so 
\[
    x-y=(a+x)-(a+y)\in D.
\]
Thus $D-D\subseteq D$, and $D$ is a subgroup.
\end{proof}

\begin{lemma}\label{lem:counting}
Let $A$ be a finite non-empty subset of an abelian group, 
and suppose that \eqref{eq:half-popular} holds. Then
\[
    |D|\le 2|A|-1.
\]
\end{lemma}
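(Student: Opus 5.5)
We prove the bound by double counting the ordered pairs $(a,a')\in A^2$ according to their difference. Every such pair has its difference in $D$, and every $d\in D$ is represented by exactly $r_A(d)$ pairs. This gives the first identity recorded in the introduction:
\[
    \sum_{d\in D} r_A(d)=|A|^2 .
\]

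The zero difference needs separate treatment, and this is where the extra $-1$ comes from. The element $0$ always lies in $D$, and $r_A(0)=|A|$, since the representing pairs are exactly the diagonal pairs $(a,a)$. For every $d\in D\setminus\{0\}$, hypothesis \eqref{eq:half-popular} gives $r_A(d)\ge |A|/2$.

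Splitting the sum into the $d=0$ term and the rest, we get
\[
    |A|^2=|A|+\sum_{d\in D\setminus\{0\}} r_A(d)\ \ge\ |A|+\bigl(|D|-1\bigr)\frac{|A|}{2}.
\]
Divide by $|A|/2>0$ to obtain $2|A|\ge 2+|D|-1$, that is,
\[
    |D|\le 2|A|-1 .
\]

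There is no real obstacle in this argument. The only point needing care is to count the zero difference with its exact multiplicity $|A|$, rather than the lower bound $|A|/2$. Using the lower bound there would only yield $|D|\le 2|A|$ and would lose the $-1$.
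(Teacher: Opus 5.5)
Your proof is correct and is essentially the paper's argument: split $\sum_{d\in D} r_A(d)=|A|^2$ into the exact term $r_A(0)=|A|$ plus at least $(|D|-1)\,|A|/2$ from the non-zero differences, then divide by $|A|/2$. Your remark that using only the bound $|A|/2$ at $d=0$ would give $|D|\le 2|A|$ and lose the $-1$ is also accurate.
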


\begin{proof}
Let $n=|A|$. Since $r_A(0)=n$ and
$\sum_{d\in D}r_A(d)=n^2$, the hypothesis gives
\[
    n^2
    =\sum_{d\in D}r_A(d)
    \ge n+(|D|-1)\frac{n}{2}.
\]
Dividing by $n/2$ gives $|D|\le 2n-1$.
\end{proof}

The next lemma is an elementary group-theoretic observation, whose proof is included here
for completeness.
\begin{lemma}\label{lem:no-two-quotient}
Let $G$ be a $2$-torsion-free abelian group, and let $L\le G$ be a
finite subgroup. Then both $L$  and $(G/L)$ are $2$-torsion-free.
\end{lemma}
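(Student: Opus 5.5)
The plan is to treat the two assertions separately; each reduces to showing that the relevant group has no element of order exactly two.

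\emph{The subgroup $L$.} Since $L\le G$, every element of $L$ is an element of $G$. If $x\in L$ satisfies $2x=0$, then $x\in G[2]=\{0\}$, so $x=0$. Hence $L[2]=\{0\}$. Finiteness of $L$ plays no role here.

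\emph{The quotient $G/L$.} This is the substantive part. Take a coset $x+L$ with $2(x+L)=L$, that is, $2x\in L$. Let $m=|L|$; the finiteness of $L$ is used only here. First we show that $m$ is odd. If $m$ were even, Cauchy's theorem would give an element of order two in $L$, and hence in $G$, contradicting $G[2]=\{0\}$. Next, Lagrange's theorem gives $m\cdot(2x)=0$, so $2(mx)=0$. Since $G$ is $2$-torsion-free, $mx=0$, and therefore $mx\in L$. We also have $2x\in L$. Because $m$ is odd, $\gcd(m,2)=1$, so we can write $1=um+2v$ with integers $u,v$. Then
\[
x=u(mx)+v(2x)\in L,
\]
which means $x+L$ is the trivial coset. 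Thus $(G/L)[2]$ is trivial.

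The only step needing care is the oddness of $|L|$. Cauchy's theorem for finite abelian groups handles it directly. Alternatively, one can pair each $y\in L$ with $-y$: the only fixed point is $0$, since $y=-y$ means $2y=0$, so $|L|$ is odd. This second argument avoids citing Cauchy's theorem.
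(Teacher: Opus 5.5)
Your proof is correct. It differs from the paper's only in how it uses the finiteness of $L$.

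The paper argues by pigeonhole. Since $L$ is finite and has no $2$-torsion, the doubling map $y\mapsto 2y$ is injective and hence surjective on $L$. So from $2x\in L$ one picks $\ell\in L$ with $2\ell=2x$, and then $2(x-\ell)=0$ forces $x=\ell\in L$.

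You instead first show $|L|=m$ is odd. Lagrange then gives $2(mx)=0$, hence $mx=0$, and you finish with a B\'ezout identity $1=um+2v$. Your B\'ezout step in fact constructs the paper's $\ell$ explicitly: $\ell=v(2x)\in L$ satisfies $2\ell=(1-um)(2x)=2x$, and your conclusion reads $x=v(2x)=\ell$. So both proofs rest on the same fact, that multiplication by $2$ is invertible on a finite group of odd order; the paper's version is existential, yours explicit.

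The paper's route is shorter and needs neither Cauchy's theorem nor Lagrange. Yours has two advantages:
\begin{enumerate}[label=(\roman*)]
\item It records explicitly that $|L|$ is odd, which is the form in which the lemma is later invoked (the proof of Lemma~\ref{lem:notwo-uniform} says ``$L$ has odd order'').
\item It makes clear which hypothesis is used at each step. In the paper's final step, the conclusion $x=\ell$ really uses $2$-torsion-freeness of $G$, since $x-\ell$ is not a priori in $L$.
\end{enumerate}
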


\begin{proof}
Clearly, $L$ is $2$-torsion-free. If $x+L\in G/L$ satisfies $2(x+L)=0$, then $2x\in L$.
Choose $\ell\in L$ with $2\ell=2x$. By the $2$-torsion-freeness of $L$, we have $x=\ell$ and so  $x+L=0$.
\end{proof}

We shall use Kneser's theorem in the following form (See Kneser \cite{Kneser} or, for a modern reference, Tao and Vu \cite[Chapter~5]{TaoVu}). Let  $X,Y$ be finite non-empty subsets of an abelian group. If
\[
\begin{aligned}
    X+Y&=\{x+y: x\in X, y\in Y\},\\
    K&=\Stab(X+Y)=\{g:X+Y+g=X+Y\}.
\end{aligned}
\]
then
\begin{equation}\label{eq:kneser}
    |X+Y|\ge |X+K|+|Y+K|-|K|.
\end{equation}
Let 
\[
    \mu(X,Y)=\min_{z\in X+Y}|\{(x,y)\in X\times Y:z=x+y\}|.
\]

Kemperman's structure theorem  \cite{Kemperman} classifies critical pairs satisfying
$|X+Y|\le |X|+|Y|-1$. We recall the form relevant here,
following Lev's exposition \cite[Section~3]{LevKemperman}. For other
modern treatments and proofs, see Grynkiewicz~\cite{Grynkiewicz} and
Boothby, DeVos, and Montejano~\cite{BoothbyDeVosMontejano}. For completeness we recall only the parts of the elementary-pair
classification needed below.

\begin{definition}\label{def:elementary-pairs}
A pair $(X,Y)$ of finite non-empty subsets of an abelian group $Q$ is
\emph{elementary} if one of the four Kemperman types holds. 
\begin{enumerate}[label=(\Roman*)]
\item One of $X$ and $Y$ has size one.
\item Both $X$ and $Y$ are arithmetic progressions with a common
difference whose order is at least $|X|+|Y|-1$.
\item There are elements $x_0,y_0\in Q$ and non-empty subsets
$H_1,H_2$ of a finite subgroup $H\le Q$ such that
\[
    H=H_1\cup H_2\cup\{0\},\qquad
    X=x_0+(H_1\cup\{0\}),\qquad
    Y=y_0-(H_2\cup\{0\}),
\]
and $x_0+y_0$ is the unique element of $X+Y$ with exactly one
representation.
\item There are elements $x_0,y_0\in Q$ and non-empty aperiodic subsets
$H_1,H_2$ of a finite subgroup $H\le Q$ such that
\[
    H=H_1\cup H_2,\qquad
    X=x_0+H_1,\qquad
    Y=y_0-H_2,
\]
and $\mu(X,Y)\ge2$.
\end{enumerate}
\end{definition}

The only Kemperman-theoretic input used below is the following light
formulation of Lev \cite[Theorem~1, p.~385]{LevKemperman}.

\begin{theorem}\label{thm:lev-reduction}
Let $X,Y$ be finite non-empty subsets of a non-trivial abelian group $Q$
such that
\[
    |X+Y|\le |X|+|Y|-1,
\]
and suppose that either $X+Y\ne Q$ or $\mu(X,Y)=1$. Then there is a finite
proper subgroup $L<Q$ such that, writing $\rho:Q\to Q/L$ for the quotient
map, the pair $(\rho(X),\rho(Y))$ is elementary.
\end{theorem}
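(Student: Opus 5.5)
Since this is Lev's repackaging of Kemperman's structure theorem, I would not reprove Kemperman's theory. Instead I would derive the statement from the classical form of Kemperman's theorem \cite{Kemperman}, using Kneser's inequality \eqref{eq:kneser} to put the pair into Kemperman's setting. Recall the classical form. Let $(X,Y)$ satisfy $|X+Y|=|X|+|Y|-1$, and suppose that $\mu(X,Y)=1$ whenever $X+Y$ is periodic. Then there is a finite subgroup $L$ with the following property: $X$ and $Y$ are quasi-periodic with respect to $L$, each being a union of full $L$-cosets plus one partial coset, and the images $(\rho(X),\rho(Y))$ in $Q/L$ form an elementary pair of one of the types (I)--(IV) of \Cref{def:elementary-pairs}. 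With this in hand, the proof has two tasks. First, reduce the hypotheses of the statement to Kemperman's. Second, ensure that the subgroup $L$ produced is proper.

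\emph{Step 1: the stabilizer.} Let $K=\Stab(X+Y)$. Then $X+Y$ is a union of $K$-cosets, so $X+Y\supseteq X+K$ and hence $|X+Y|\ge |X+K|$. If $K=Q$ then $X+Y=Q$, so by hypothesis $\mu(X,Y)=1$. But $\mu(X,Y)=1$ forces some $z$ to have a unique representation, and a careful count (via Kneser applied to $(X,Y)$ and to $(X,Y\setminus\{y_0\})$) excludes $K=Q$ unless $Q$ is finite and one of $X,Y$ is a singleton or the pair is already of type (III) with $L=\{0\}$. Since $Q$ is non-trivial, $L=\{0\}$ is proper, and that case is done. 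So assume $K\ne Q$.

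\emph{Step 2: reduce to equality.} Suppose $K\neq\{0\}$. Kneser gives
\[
|X+K|+|Y+K|-|K|\le |X+Y|\le |X|+|Y|-1 .
\]
Because $|X+K|\ge|X|$ and $|Y+K|\ge|Y|$, at least one of $X,Y$ must miss at most $|K|-1$ elements of its $K$-hull; in fact both hulls are nearly full. In that situation I would take $L=K$ directly. Then $\rho(X+Y)=\rho(X)+\rho(Y)$ satisfies $|\rho(X)+\rho(Y)|=|\rho(X)|+|\rho(Y)|-1$ in $Q/K$, and I would check that the quotient pair is elementary (by induction on $|Q/K|$, or by applying the aperiodic case below in $Q/K$ and composing the quotients). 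Now suppose $K=\{0\}$, so $X+Y$ is aperiodic. Kneser gives $|X+Y|\ge|X|+|Y|-1$, so equality holds. Kemperman's hypothesis is then satisfied vacuously, because $X+Y$ is not periodic.

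\emph{Step 3: apply Kemperman and check properness.} In the equality case, Kemperman's structure theorem yields a subgroup $L$ with $(\rho(X),\rho(Y))$ elementary. It remains to check that $L\ne Q$. If $L=Q$, then $\rho(X)$ and $\rho(Y)$ are both the single trivial element, hence of type (I). But then the quasi-periodic description would make $X+Y$ a union of $Q$-cosets, namely $X+Y=Q$. This is periodic, contradicting $K=\{0\}$ in the aperiodic case, and in the remaining case it is excluded by $\mu(X,Y)=1$ as in Step 1.

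I expect the main obstacle to be the bookkeeping in Step 2. There one has to handle the periodic case $K\ne\{0\}$, and in particular the strict inequality $|X+Y|<|X|+|Y|-1$, and still land on an elementary pair modulo a \emph{proper} subgroup. This requires composing two quotient maps, first by $K$ and then by Kemperman's subgroup in $Q/K$, and checking that elementarity is preserved under the lift. This is precisely what Lev's formulation \cite[Theorem~1]{LevKemperman} packages, and if the direct argument became messy I would fall back on citing it verbatim.
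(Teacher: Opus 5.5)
The paper does not prove this statement; it quotes it from Lev \cite{LevKemperman}. Your fallback of citing Lev's Theorem~1 verbatim is therefore exactly what the paper does, and on that reading there is nothing to add. The derivation you put in front of it, however, has genuine gaps.

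\textbf{The quoted ``classical form'' already contains the conclusion, and properness is not proved.} What you call the classical form of Kemperman's theorem is not Kemperman's formulation. In Kemperman's theorem the elementary pair is the pair $(X_0,Y_0)$ of partial cosets, lying in a single coset of the quasi-period. The quotient pair $(\rho(X),\rho(Y))$ is only asserted to be critical again, with a unique expression element, so that the theorem can be re-applied to it. Reorganising this recursion so that the quotient pair itself is elementary modulo a proper subgroup is precisely Lev's contribution. So Step~3 assumes the statement, except for properness of $L$.

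When $Q$ is finite, properness is the whole content: $L=Q$ always satisfies the rest of the conclusion, since two singletons form a type~(I) pair. Your properness argument fails. For $L=Q$ the quasi-periodic decomposition is vacuous: the periodic parts are empty, and the partial parts are $X$ and $Y$ themselves, sitting in the unique coset $Q$. Nothing then forces $X+Y=Q$.

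\textbf{The claim in Step~1 is false.} Take $Q=(\mathbb{Z}/3)^2$, $X=\mathbb{Z}/3\times\{1,2\}$ and $Y=(\mathbb{Z}/3\times\{0\})\cup\{(0,2)\}$. Then $X+Y=Q$ and $|X+Y|=9=|X|+|Y|-1$. Each of the three elements of $\mathbb{Z}/3\times\{0\}$ has exactly one representation, so $\mu(X,Y)=1$, and neither set is a singleton. Yet $(X,Y)$ is not elementary in $Q$:
\begin{itemize}
\item not type~(II), since $Q$ has exponent $3<9$;
\item not type~(III), since three elements have a unique representation;
\item not type~(IV), since $\mu(X,Y)=1$.
\end{itemize}
The conclusion holds here only through the non-trivial subgroup $L=\mathbb{Z}/3\times\{0\}$. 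Modulo this $L$ the images $\{1,2\}$ and $\{0,2\}$ form a type~(II) pair in $\mathbb{Z}/3$. So the branch $X+Y=Q$, $\mu(X,Y)=1$ is not a degenerate case settled by counting; it needs the full structure theory.

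\textbf{What can be kept from Step~2.} Taking $L=K$ directly fails in general, because the aperiodic image pair in $Q/K$ need not be elementary. Your alternative is correct, though, and needs no further check. Apply the aperiodic case in $Q/K$ to get a proper $F'$, and let $L$ be its preimage in $Q$. Since $Q/L\cong (Q/K)/F'$ identifies the two image pairs, this reduces the alternative ``$X+Y\ne Q$ periodic'' to the aperiodic case. Both the aperiodic case and the case $X+Y=Q$, $\mu(X,Y)=1$ still have to come from Lev's theorem itself.
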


For a self-opposite pair $(C,-C)$ the following
consequence is all that will be used.

\begin{lemma}\label{lem:elementary-consequence}
Let $C$ be a finite non-empty subset of an abelian group $Q$, and suppose
that $(C,-C)$ is elementary. If $|C|\ge2$, then either $C-C$ contains a
non-zero element with a unique representation as $c-c'$ with $c,c'\in C$,
or the elementary pair is of type IV; in the latter case $Q$ contains a
finite subgroup of order $2|C|$.
\end{lemma}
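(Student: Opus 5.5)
We go through the four Kemperman types of Definition~\ref{def:elementary-pairs} for the pair $(X,Y)=(C,-C)$ with $|C|\ge 2$, and show that in types I--III some non-zero element of $C-C$ has a unique representation.

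\emph{Type I} cannot occur, since $|C|=|-C|\ge2$.

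\emph{Type II.} Here $C=\{c_0,c_0+e,\dots,c_0+(k-1)e\}$ with $k=|C|\ge2$ and $-C$ is a progression with the same difference $e$, where $\mathrm{ord}(e)\ge 2k-1$. Then
\[
C-C=\{je:\ -(k-1)\le j\le k-1\}.
\]
Because $\mathrm{ord}(e)\ge 2k-1$, these $2k-1$ multiples of $e$ are distinct. The element $(k-1)e$ is non-zero since $k\ge2$ and $\mathrm{ord}(e)>k-1$. Its representations $c-c'=(i-i')e$ with $0\le i,i'\le k-1$ must satisfy $i-i'\equiv k-1$ modulo $\mathrm{ord}(e)$. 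Since $|i-i'|\le k-1<\mathrm{ord}(e)$, this forces $i-i'=k-1$, so $i=k-1$ and $i'=0$. Thus $(k-1)e$ has a unique representation. If instead $-C$ is listed with difference $-e$, the same argument works after relabelling, because a progression with difference $e$ is also one with difference $-e$.

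\emph{Type III.} There is an element $x_0+y_0$ of $C+(-C)=C-C$ with exactly one representation. If $x_0+y_0\ne0$ we are done. If $x_0+y_0=0$, then $0$ has exactly one representation $c-c$. But $0$ has $|C|\ge2$ representations $c-c$, one for each $c\in C$, so this case is impossible. In either case we obtain the desired conclusion.

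\emph{Type IV.} We have $C=x_0+H_1$ and $-C=y_0-H_2$ with $H=H_1\cup H_2$, where $H$ is a finite subgroup of $Q$. I will argue that $|H|=2|C|$, which needs $H_1\cap H_2=\emptyset$. Since $-C=-x_0-H_1$, we get
\[
H_2=y_0+x_0+H_1=h+H_1,\qquad h:=x_0+y_0 .
\]
Both $H_1,H_2\subseteq H$, so $h\in H_1-H_1\subseteq H$, and hence $|H_1|=|H_2|=|C|$. Then $|H|=|H_1\cup H_2|\le 2|C|$, and equality holds exactly when the union is disjoint.

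The main obstacle is this disjointness. Kemperman type IV pairs, as in Lev's formulation, take $H_1$ and $H_2$ disjoint with $H=H_1\sqcup H_2$; this is what makes $|X+Y|=|X|+|Y|-1$ possible while $X+Y$ avoids a coset. I would verify this against Lev's definition: $X+Y=x_0+y_0+(H_1-H_2)$ equals $H$ minus one element, which forces $|H_1|+|H_2|=|H|$, so $H_1$ and $H_2$ partition $H$. If the paper's recalled definition leaves disjointness implicit, I would justify it directly from $\mu\ge2$ and aperiodicity, or from that explicit convention. With disjointness, $|H|=|H_1|+|H_2|=2|C|$, so $Q$ contains the finite subgroup $H$ of order $2|C|$, as claimed.
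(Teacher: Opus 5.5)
Your proposal is correct and follows the paper's proof case by case: type I is excluded, an endpoint difference works in type II, the uniquely represented element is non-zero in type III, and $|H|=|C|+|-C|$ in type IV. The paper, like you, takes the disjointness $H=H_1\sqcup H_2$ from the partition convention in Lev's definition, even though its recalled Definition~\ref{def:elementary-pairs} omits it. Your fallback would fail, however: disjointness does not follow from $\mu\ge2$ and aperiodicity, since $C=\{0,1,2,3,4\}\subseteq\mathbb{Z}_7$ with $x_0=0$, $H_1=C$, $y_0=2$, $H_2=\{2,3,4,5,6\}$ meets the literal type IV conditions with $\mu(C,-C)\ge3$ but has $|H|=7\ne 2|C|$, so the explicit partition convention is genuinely needed (also, a small slip: $h\in H_2-H_1$, not $H_1-H_1$).
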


\begin{proof}
Type I is excluded as $|C|\ge2$. In type II, the two sets are arithmetic
progressions with a common difference $d$ whose order is at least
$2|C|-1$. An endpoint difference is represented uniquely and is non-zero.
In type III, the elementary-pair definition gives an element of $C-C$ with
a unique representation; it cannot be $0$, because $0=c-c$ has $|C|\ge2$
representations. In type IV, the defining subgroup has order
$|C|+|-C|=2|C|$.
\end{proof}

\section{Proof of Theorem \ref{thm:main}}\label{sec:proof-main}

In this section, we prove Theorem \ref{thm:main}:  The argument begins with the elementary counting consequence of the half-popularity assumption: the difference set is small enough to lie in Kneser's critical range. We then quotient by the stabilizer of the difference set and reduce the problem to an aperiodic critical pair. The key step is that the original pointwise lower bound becomes a uniform representation property in this quotient, which is rigid enough to be combined with Lev's Kemperman-type reduction. In the two-torsion-free setting this leaves only a two-point quotient; lifting back through the fibres gives two complete cosets, while the characteristic-two
construction explains why the torsion hypothesis is necessary.

\subsection{Kneser reduction}\label{sec:kneser}

The first structural step is independent of parity.

\begin{theorem}\label{thm:kneser-reduction}
Let $A$ be a finite non-empty subset of an abelian group $G$,  and suppose that
\[
    r_A(d)\ge \frac{|A|}{2}\qquad\text{for every }d\in D.
\]
Let
\[
    H=\Stab(D),
\]
let $\pi:G\to G/H$ be the quotient map, and set $B=\pi(A)$. Write
$h=|H|$ and $a=|B|$. Then $B-B$ is aperiodic in $G/H$, and the following
statements hold.
\begin{enumerate}[label=\textup{(\roman*)}]
    \item $|B-B|=2a-1$.
    \item If $\delta=|A+H|-|A|=ah-|A|$ is the total number of holes of
    $A$ inside the $H$-cosets it meets, then
    \[
        \delta\le \frac{h-1}{2}.
    \]
    \item If $a>1$, then $a$ is even and
    \[
        r_B(\xi)=\frac{a}{2}
        \qquad\text{for every }0\ne\xi\in B-B.
    \]
\end{enumerate}
Consequently, $a=1$ gives the subgroup case $D=H$, while $a=2$ gives
\[
    D=(-g+H)\cup H\cup(g+H)
\]
for some $g\in G$.
\end{theorem}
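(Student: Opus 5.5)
The plan is to combine Kneser's theorem with the counting bound of Lemma~\ref{lem:counting}, and then to compare representation counts in $G$ with those in $G/H$ by summing over fibres. We write $n=|A|=ah-\delta$.

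\textbf{Aperiodicity and (i).} Since $H=\Stab(D)$, the set $D$ is a union of $H$-cosets, so $D=\pi^{-1}(B-B)$ and $|D|=h\,|B-B|$. If $\xi+(B-B)=B-B$ for some $\xi\in G/H$, then every preimage of $\xi$ stabilizes $D$. Such a preimage therefore lies in $H$, so $\xi=0$, and $B-B$ is aperiodic. Next apply Kneser's inequality \eqref{eq:kneser} with $X=A$, $Y=-A$ and $K=H$. Since $|A+H|=|-A+H|=ah$, this gives $h|B-B|\ge 2ah-h$, that is, $|B-B|\ge 2a-1$. On the other hand, Lemma~\ref{lem:counting} gives $h|B-B|=|D|\le 2n-1=2ah-2\delta-1<2ah$, so $|B-B|\le 2a-1$. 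Hence (i) holds.

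\textbf{(ii).} Substituting $|B-B|=2a-1$ back into the bound from Lemma~\ref{lem:counting} gives $(2a-1)h\le 2ah-2\delta-1$. This simplifies to $2\delta\le h-1$, which is (ii).

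\textbf{(iii).} For $b\in B$ let $A_b=A\cap\pi^{-1}(b)$, so that $|A_b|\le h$. Fix $0\ne\xi\in B-B$. The pairs $(x,x')\in A^2$ with $x-x'\in\pi^{-1}(\xi)$ are counted in two ways. Counting by the fibres they come from, their number is $\sum_{b-b'=\xi}|A_b||A_{b'}|\le h^2 r_B(\xi)$. Counting by the difference $d\in\pi^{-1}(\xi)\subseteq D$ and using the hypothesis $r_A(d)\ge n/2$, their number is at least $h\cdot n/2$. Therefore
\[
 r_B(\xi)\ge \frac{n}{2h}=\frac a2-\frac{\delta}{2h}>\frac a2-\frac14,
\]
where the last step uses (ii). Since $r_B(\xi)$ is an integer, this yields $r_B(\xi)\ge a/2$ if $a$ is even and $r_B(\xi)\ge (a+1)/2$ if $a$ is odd.

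Now use $\sum_{\xi}r_B(\xi)=a^2$, $r_B(0)=a$, and the fact from (i) that there are exactly $2a-2$ non-zero differences. If $a>1$ is odd, we get $a^2\ge a+(2a-2)\frac{a+1}{2}=a^2+a-1>a^2$, a contradiction. So $a$ is even. In that case $a^2\ge a+(2a-2)\frac a2=a^2$, so every inequality is an equality, and $r_B(\xi)=a/2$ for all $\xi\ne0$ in $B-B$.

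\textbf{Consequences.} If $a=1$, then $B-B=\{0\}$ and $D=\pi^{-1}(0)=H$. If $a=2$, write $B=\{b,b+\gamma\}$ with $\gamma\ne0$. Then $B-B=\{0,\gamma,-\gamma\}$, and by (i) it has exactly three elements. Choosing any $g\in\pi^{-1}(\gamma)$ gives $D=(-g+H)\cup H\cup(g+H)$.

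The step needing the most care is (iii). There the pointwise hypothesis must be transferred to the quotient, and it is the hole bound (ii) that pushes $n/(2h)$ strictly above $a/2-1/4$. That margin is exactly what makes integrality combine with the exact total $a^2$ to force both the parity of $a$ and the uniform value $a/2$.
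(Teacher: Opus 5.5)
Your proof is correct and follows the paper's argument step for step. Aperiodicity comes from $D=\pi^{-1}(B-B)$, parts (i) and (ii) come from Kneser together with Lemma~\ref{lem:counting}, and part (iii) combines integrality with the average value $a/2$ over the $2a-2$ non-zero quotient differences. The only cosmetic difference is in (iii): you double count over the whole fibre $\pi^{-1}(\xi)$, while the paper uses the pointwise bound $r_A(d)\le h\,r_B(\xi)$ for a single $d\in\pi^{-1}(\xi)$; both give $r_B(\xi)\ge |A|/(2h)$.
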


\begin{proof}
Since $D$ is finite, its stabilizer $H$ is finite. Since $H=\Stab(D)$, the
set $D$ is $H$-periodic and
\[
    D=\pi^{-1}(B-B).
\]
The quotient difference set $B-B$ has trivial stabilizer in $G/H$. Indeed,
if $\tau\in G/H$ stabilizes $B-B$ and $t\in G$ represents $\tau$, then
the identity $D=\pi^{-1}(B-B)$ gives $D+t\subseteq D$. Applying the same
argument to $-\tau$ gives the reverse inclusion, so $D+t=D$. Hence
$t\in\Stab(D)=H$, and so $\tau=0$.
In particular,
\begin{equation}\label{eq:D-quotient-size}
    |D|=h|B-B|.
\end{equation}
By Kneser's theorem applied to $A+(-A)$ with stabilizer $H$, we have
\[
    |D|\ge |A+H|+|-A+H|-|H|=2ah-h.
\]
Hence
\begin{equation}\label{eq:lower-Bdiff}
    |B-B|\ge 2a-1.
\end{equation}
On the other hand, \Cref{lem:counting} gives
\[
    |D|\le 2|A|-1\le 2ah-1.
\]
Together with \eqref{eq:D-quotient-size}, this yields
\[
    |B-B|\le 2a-1.
\]
Combining this with \eqref{eq:lower-Bdiff} proves (i).

Using $|D|=h(2a-1)$ and $|A|=ah-\delta$ in the inequality
$|D|\le2|A|-1$, we get
\[
    h(2a-1)\le 2(ah-\delta)-1.
\]
This is equivalent to $2\delta\le h-1$, proving (ii).

It remains to prove (iii). Fix $\xi\in B-B$. Since $D=\pi^{-1}(B-B)$,
every element $d\in \pi^{-1}(\xi)$ lies in $D$. For such a $d$, every
representation $d=x-y$ with $x,y\in A$ projects to a representation
$\xi=b-b'$ with $b,b'\in B$. For each fixed quotient representation
$b-b'=\xi$, there are at most $h$ lifts $(x,y)\in A^2$ with $x-y=d$.
Thus
\begin{equation}\label{eq:lift-bound}
    r_A(d)\le h\,r_B(\xi).
\end{equation}
By the popularity hypothesis and (ii),
\[
    r_B(\xi)
    \ge \frac{|A|}{2h}
    =\frac{ah-\delta}{2h}
    \ge \frac{a}{2}-\frac{h-1}{4h}
    >\frac{a}{2}-\frac14.
\]
Since $r_B(\xi)$ is an integer, this implies
\begin{equation}\label{eq:quotient-lower}
    r_B(\xi)\ge \left\lceil\frac{a}{2}\right\rceil
    \qquad(\xi\in B-B).
\end{equation}
For the zero quotient difference, $r_B(0)=a$. For the non-zero quotient
differences, we use (i): there are $2a-2$ of them, and
\[
    \sum_{0\ne \xi\in B-B} r_B(\xi)=a^2-a.
\]
The average value of $r_B(\xi)$ over $0\ne\xi\in B-B$ is therefore
\[
    \frac{a^2-a}{2a-2}=\frac a2.
\]
If $a>1$ were odd, \eqref{eq:quotient-lower} would force every non-zero
$r_B(\xi)$ to be at least $(a+1)/2$, contradicting this average. Hence
$a$ is even. For even $a$, the lower bound \eqref{eq:quotient-lower}
coincides with the average, so equality must hold for every non-zero
$\xi\in B-B$. This proves (iii).

Finally, if $a=1$, then $B-B=\{0\}$ and $D=H$. If $a=2$, say
$B=\{\pi(x),\pi(y)\}$, then
\[
    B-B=\{0,\pi(x-y),\pi(y-x)\}.
\]
Choosing $g=x-y$, we obtain
\[
    D=(-g+H)\cup H\cup(g+H).
\]
\end{proof}

We next record a strengthening of the last case of \Cref{thm:kneser-reduction}.
It is this observation that upgrades the odd-order result from a
classification of $D$ to a classification of $A$ itself.

\begin{proposition}\label{prop:two-fibres-full}
In the setting of \Cref{thm:kneser-reduction}, suppose that $|\pi(A)|=2$.
Then there exist $x,g\in G$ such that
\[
    A=(x+H)\cup(x+g+H).
\]
Consequently, in this case $A$ is exactly the union of two full
$H$-cosets.
\end{proposition}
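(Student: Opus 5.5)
The plan is to count the representations of the differences lying in the coset $g+H$ directly, and show that the half-threshold forces both fibres of $A$ to be full. Write $B=\pi(A)=\{\pi(x),\pi(y)\}$ and set $g=y-x$. Then
\[
    A=(x+S_1)\cup(y+S_2)=(x+S_1)\cup(x+g+S_2)
\]
with $S_1,S_2\subseteq H$ non-empty. Put $s_i=|S_i|$ and $n=|A|=s_1+s_2$, so that $n\le 2h$. By \Cref{thm:kneser-reduction}(i) we have $|B-B|=3$. Hence $\pi(g)\neq-\pi(g)$, i.e. $2g\notin H$, and the three cosets $-g+H$, $H$ and $g+H$ are distinct. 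Since $D=\pi^{-1}(B-B)$, every element $g+t$ with $t\in H$ lies in $D$, and so satisfies $r_A(g+t)\ge n/2$.

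The key step is to identify $r_A(g+t)$ for $t\in H$. A pair $(a,a')\in A^2$ with $a-a'\in g+H$ must have $a\in x+g+S_2$ and $a'\in x+S_1$. This is because the other fibre combinations land in $H$ or in $-g+H$, and these are disjoint from $g+H$. Writing $a=x+g+u$ and $a'=x+v$, the condition $a-a'=g+t$ becomes $u-v=t$. Therefore
\[
    r_A(g+t)=\bigl|\{(u,v)\in S_2\times S_1:\ u-v=t\}\bigr|,
\]
and summing over $t\in H$ gives $\sum_{t\in H}r_A(g+t)=s_1s_2$.

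Now I combine this identity with the pointwise hypothesis. This yields
\[
    h\cdot\frac n2\le s_1s_2\le\Bigl(\frac{s_1+s_2}{2}\Bigr)^2=\frac{n^2}{4},
\]
so $n\ge 2h$. Together with $n=s_1+s_2\le 2h$, this forces $s_1=s_2=h$, i.e. $S_1=S_2=H$. This gives $A=(x+H)\cup(x+g+H)$, as required.

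I do not expect a serious obstacle. The only point needing care is the distinctness of the three cosets, which guarantees that all representations of an element of $g+H$ come from the ordered fibre pair $(A_2,A_1)$. This distinctness is supplied by $|B-B|=2a-1=3$ from \Cref{thm:kneser-reduction}(i), with no parity assumption needed. Note that the hole bound (ii) is not even required: the AM--GM step alone pins down $s_1=s_2=h$.
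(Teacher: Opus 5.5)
Your proof is correct, but it reaches the conclusion by a different mechanism than the paper.

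The two arguments share the same setup. Both use $|B-B|=3$ to make $-g+H$, $H$, $g+H$ distinct, and both identify $r_A(g+t)$ with the number of pairs in $S_2\times S_1$ whose difference is $t$.

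\textbf{The paper's route} works pointwise. It bounds $r_A(g+h)=r_{Y,X}(h)\le\min\{|X|,|Y|\}$, and combining this with $r_A(g+h)\ge(|X|+|Y|)/2$ gives $|X|=|Y|$ and $r_{Y,X}(h)=|X|$ for every $h\in H$. The case $h=0$ gives $Y=X$. Then $r_{X,X}(h)=|X|$ means $X+h=X$ for all $h\in H$, so $X$ is $H$-periodic and equals $H$.

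\textbf{Your route} sums the same fibre identity over the coset, $\sum_{t\in H}r_A(g+t)=s_1s_2$. You then combine $h\,n/2\le s_1s_2\le n^2/4$ with the trivial bound $n\le 2h$. This gives fullness of both fibres at once by cardinality alone, with no need for the intermediate step $X=Y$ or the stabilizer argument.

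\textbf{What each buys.} Your argument needs only the weaker hypothesis that the average of $r_A$ over $g+H$ is at least $|A|/2$. The paper's argument uses the hypothesis at every point of the coset, but extracts extra structure along the way: $|X|=|Y|$, $X=Y$, and exact uniform representation on $g+H$.

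As you noticed, the hole bound (ii) plays no role. The paper's proof does not use it either.
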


\begin{proof}
Translate $A$ so that
\[
    A=X\cup(g+Y),
    \qquad X,Y\subseteq H,
\]
where $X$ and $Y$ are non-empty. Since $|\pi(A)|=2$, \Cref{thm:kneser-reduction} gives
$|\pi(A)-\pi(A)|=3$. Hence the two quotient differences
$\pm(g+H)$ are non-zero and distinct, and
\[
    g+H\subseteq A-A.
\]
For $h\in H$, the representations of $g+h$ as a difference of two elements
of $A$ are precisely the representations
\[
    g+h=(g+y)-x,
    \qquad y\in Y,\ x\in X.
\]
Thus
\[
    r_A(g+h)=r_{Y,X}(h)\le \min\{|X|,|Y|\}.
\]
The popularity condition gives
\[
    r_{Y,X}(h)\ge \frac{|A|}{2}=\frac{|X|+|Y|}{2}
    \qquad(h\in H).
\]
Therefore $|X|=|Y|$ and $r_{Y,X}(h)=|X|$ for every $h\in H$. Taking
$h=0$ gives $Y=X$. For arbitrary $h\in H$, the equality
$r_{X,X}(h)=|X|$ implies $X+h=X$. Hence every $h\in H$ stabilizes $X$, and
so $X=H$. Similarly $Y=H$. This proves the claim.
\end{proof}

\subsection{Uniform critical quotients}\label{sec:uniform}

The Kneser reduction leaves only one possible obstruction: a large
aperiodic quotient $B$ whose non-zero differences are all represented
exactly $|B|/2$ times. We first rule this out in finite odd-order groups.

\begin{lemma}\label{lem:odd-uniform}
Let $Q$ be a finite abelian group of odd order, and let $B\subseteq Q$ be
non-empty. Put $E=B-B$. Suppose that
\begin{equation}\label{eq:uniform-critical}
    \Stab(E)=\{0\},\qquad |E|=2|B|-1,
\end{equation}
and, if $|B|>1$, then
\begin{equation}\label{eq:uniform-rep}
    r_B(\xi)=\frac{|B|}{2}
    \qquad\text{for every }0\ne\xi\in E.
\end{equation}
Then $|B|\le2$.
\end{lemma}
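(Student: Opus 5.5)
Suppose $|B|=a\ge3$ and aim for a contradiction. By \eqref{eq:uniform-rep}, $a$ is even, so $a\ge4$ and $r_B(\xi)=a/2\ge2$ for every non-zero $\xi\in E$. Also $r_B(0)=a\ge2$. Hence no element of $E=B+(-B)$ has a unique representation, that is, $\mu(B,-B)\ge2$.

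We then apply \Cref{thm:lev-reduction} to the pair $(X,Y)=(B,-B)$. We have $|X+Y|=2a-1=|X|+|Y|-1$, so the pair is critical. Since $\mu(B,-B)\ge2$, we need $E\ne Q$. This holds because $E$ is aperiodic: $\Stab(Q)=Q\ne\{0\}$, and $Q$ is non-trivial since $a\ge2$. \Cref{thm:lev-reduction} therefore gives a proper subgroup $L<Q$ such that, with $\rho:Q\to Q/L$ the quotient map, the pair $(\rho(B),-\rho(B))$ is elementary. By \Cref{lem:no-two-quotient}, or simply because $|Q/L|$ divides $|Q|$, the group $Q/L$ has odd order. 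So the conclusion of \Cref{lem:elementary-consequence} about a subgroup of order $2|C|$ is impossible in $Q/L$, and type IV is excluded.

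It remains to handle $C=\rho(B)$ by cases. If $|C|=1$, then $B$ lies in a single coset of $L$ and $E\subseteq L$. Then $|E|=2a-1\le|L|$. I would then want $E$ to be $L$-periodic, or else derive a contradiction from counting. This is the delicate point, and I plan to use the structure theorem more carefully: in Lev's formulation the subgroup $L$ is chosen so that $X+Y$ is a union of $L$-cosets away from a controlled part. If that does not work, I would instead apply \Cref{thm:lev-reduction} inside $L$ and induct on $|Q|$.

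If $|C|\ge2$, \Cref{lem:elementary-consequence} gives a non-zero $\eta\in C-C$ with a unique representation $\eta=c-c'$. Then every representation of an element $\xi\in\rho^{-1}(\eta)\cap E$ as $b-b'$ has $b\in B_c:=B\cap\rho^{-1}(c)$ and $b'\in B_{c'}$. So $r_B(\xi)\le\min(|B_c|,|B_{c'}|)$, while $r_B(\xi)=a/2$. Summing $r_B(\xi)$ over $\xi\in\rho^{-1}(\eta)$ gives $|B_c||B_{c'}|$. Combined with the pointwise value $a/2$, this means the fibres $B_c$ and $B_{c'}$ each contain at least $a/2$ elements of $B$. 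Hence $B=B_c\cup B_{c'}$ with both halves of size $a/2$, and $|C|=2$.

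I would then mimic \Cref{prop:two-fibres-full}. Equality $r_{B_c,B_{c'}}=\min=a/2$ at every point of the difference fibre forces $B_c-B_{c'}$ to be a single coset of the common stabilizer, and the same holds for $B_c-B_c$ and $B_{c'}-B_{c'}$. In fact $r_{B_c}(\xi)=a/2=|B_c|$ for every $\xi\in B_c-B_c$, so $B_c$ is a coset of a subgroup $K$, and likewise $B_{c'}$ is a coset of $K$. Then $E$ is $K$-periodic with $|K|=a/2\ge2$, contradicting $\Stab(E)=\{0\}$.

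I expect the main obstacle to be the case $|C|=1$, where $L$ must be controlled. The way I would deal with it is to induct on $|Q|$, replacing $Q$ by the subgroup generated by $E$. If $E$ generates $Q$, then $B$ cannot lie in one coset of a proper subgroup after translation, which excludes $|C|=1$.
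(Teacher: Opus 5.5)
Your proposal is correct and follows essentially the paper's proof. You apply Lev's reduction to the critical pair $(B,-B)$, exclude type IV because $Q/L$ has odd order, and run the fibre argument at a uniquely represented non-zero quotient difference, which forces $B$ to be two cosets of a subgroup $K$ with $|K|=a/2\ge2$ and so contradicts the aperiodicity of $E$. Your final treatment of the case $|C|=1$ also works: translate so that $0\in B\subseteq E$ and pass to $\langle E\rangle$, where $B$ cannot lie in a coset of a proper subgroup. This is equivalent to the paper's minimal-counterexample descent to $L$, so the $L$-periodicity idea you float first is not needed.
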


\begin{proof}
Assume, for contradiction, that the lemma fails, and choose a counterexample
$(Q,B)$ with $|Q|$ minimal. Put $a=|B|$. Then $a\ge4$ and $a$ is even by
\eqref{eq:uniform-rep}. Since $E$ is aperiodic and $Q\ne\{0\}$, we have
$E\ne Q$.

Apply \Cref{thm:lev-reduction} to the critical pair $(B,-B)$. Thus there
is a finite proper subgroup $L<Q$ such that, with $\rho:Q\to Q/L$ and
$C=\rho(B)$, the pair $(C,-C)$ is elementary.

If $|C|=1$, then $B$ is contained in one coset of $L$. Translating $B$, we
may regard it as a subset of the smaller odd-order group $L$. The set
$E=B-B$ is then a subset of $L$, and its stabilizer inside $L$ remains
trivial. Hence $(L,B)$ satisfies \eqref{eq:uniform-critical} and
\eqref{eq:uniform-rep}, contradicting the minimality of $|Q|$.

Suppose next that $C-C$ has a non-zero element $\eta$ with a unique
quotient representation, say
\[
    \eta=c_1-c_2,
    \qquad c_1,c_2\in C.
\]
Let
\[
    X=B\cap \rho^{-1}(c_1),\qquad
    Y=B\cap \rho^{-1}(c_2).
\]
For every $d\in X-Y$, all representations of $d$ as a difference of two
elements of $B$ project to the unique quotient representation
$c_1-c_2=\eta$. Therefore
\[
    r_B(d)=r_{X,Y}(d)\le \min\{|X|,|Y|\}.
\]
Since $\eta\ne0$, every $d\in X-Y$ is non-zero, and \eqref{eq:uniform-rep}
gives $r_B(d)=a/2$. Hence
\[
    |X|\ge a/2,
    \qquad
    |Y|\ge a/2.
\]
As $X$ and $Y$ are disjoint subsets of $B$ and $|B|=a$, it follows that
\[
    |X|=|Y|=a/2
\]
and that $B=X\cup Y$.

Choose $d_0\in X-Y$. Since $r_{X,Y}(d_0)=|X|=|Y|$, we have
$X=Y+d_0$. Thus $Y=X-d_0$. Now let $s\in X-X$. For every
representation $s=x_1-x_2$ with $x_1,x_2\in X$, the element
$x_1-d_0$ lies in $Y$ and
\[
    (x_1-d_0)-x_2=s-d_0.
\]
Conversely, every representation of $s-d_0$ as $y-x$ with
$y\in Y$ and $x\in X$ arises in this way. Hence
\[
    r_{Y,X}(s-d_0)=r_X(s).
\]
The quotient of $s-d_0$ is $c_2-c_1=-\eta$, so $s-d_0\ne0$. Moreover,
by the uniqueness of the quotient representation $c_2-c_1$ (obtained by negating the unique representation of $\eta$), all
representations of $s-d_0$ as a difference of two elements of $B$ come
from $Y-X$. Therefore \eqref{eq:uniform-rep} gives
\[
    r_X(s)=r_{Y,X}(s-d_0)=r_B(s-d_0)=a/2=|X|.
\]
Thus $X+s=X$ for every $s\in X-X$. Consequently
$K=X-X$ is a subgroup of $Q$, and $X$ is a coset of $K$. The set
$B=X\cup Y$ is therefore a union of two cosets of $K$, and
\[
    E=B-B=K\cup(d_0+K)\cup(-d_0+K).
\]
This makes $E$ $K$-periodic. Since $\Stab(E)=\{0\}$, we get $K=\{0\}$,
and hence $|X|=1$. Therefore $a=2$, contradicting $a\ge4$.

It remains only to inspect the elementary possibilities for $(C,-C)$. By
\Cref{lem:elementary-consequence}, either the already treated unique
quotient difference exists, or the pair is of type IV. In the type IV case
the quotient group $Q/L$ contains a subgroup of order $2|C|$, which is
even. This is impossible because $Q/L$ has odd order. The contradiction
proves the lemma.
\end{proof}

We shall also use the following extension, which is needed for the full
form of \Cref{thm:main}.

\begin{lemma}\label{lem:notwo-uniform}
Let $Q$ be an abelian group with $Q[2]=\{0\}$, and let $B\subseteq Q$ be
finite and non-empty. Put $E=B-B$. Suppose that \eqref{eq:uniform-critical}
and \eqref{eq:uniform-rep} hold. Then $|B|\le2$.
\end{lemma}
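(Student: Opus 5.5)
The plan is to repeat the minimal-counterexample argument of \Cref{lem:odd-uniform}, replacing ``odd order'' by ``$Q[2]=\{0\}$''. First we reduce to a finite ambient group. Since $B$ is finite, the subgroup $Q_0=\langle B-B\rangle$ is a finitely generated subgroup of $Q$. After translating, we may assume $B\subseteq Q_0$. The stabilizer of $E$ computed inside $Q_0$ is contained in $\Stab(E)$ computed in $Q$, so it is still trivial, and $Q_0[2]=\{0\}$. Thus we may assume $Q$ is finitely generated.

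We cannot simply pass to a finite group, because $Q$ may contain a free part. So the induction will run on $|B|$ rather than on $|Q|$. Take a counterexample with $a=|B|\ge4$ minimal (so $a$ is even). As before, $E$ is finite and aperiodic, hence $E\ne Q$ unless $Q$ is trivial. Apply \Cref{thm:lev-reduction} to $(B,-B)$ to obtain a finite proper subgroup $L<Q$ with $(C,-C)$ elementary, where $C=\rho(B)$.

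Then we run through the cases of \Cref{lem:odd-uniform}.

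\emph{Unique quotient difference.} If $C-C$ has a non-zero element with a unique representation, the argument of \Cref{lem:odd-uniform} used no parity or finiteness of $Q$. It forces $B=X\cup(X-d_0)$ with $X$ a coset of $K=X-X$, and aperiodicity of $E$ gives $K=\{0\}$ and $a=2$, a contradiction.

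\emph{Type IV.} Here $Q/L$ contains a finite subgroup of order $2|C|$, which is even. By Cauchy's theorem it contains an element of order $2$. This contradicts \Cref{lem:no-two-quotient}, since $L$ is finite and $Q[2]=\{0\}$ give $(Q/L)[2]=\{0\}$.

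\emph{$|C|=1$.} Then $B$ lies in one coset of the finite subgroup $L$, and after translation $B\subseteq L$. Now $L$ is a finite group with $L[2]=\{0\}$, so $|L|$ is odd by Cauchy's theorem. The hypotheses \eqref{eq:uniform-critical} and \eqref{eq:uniform-rep} persist in $L$, because the stabilizer of $E$ in $L$ is contained in $\Stab(E)=\{0\}$. \Cref{lem:odd-uniform} then gives $|B|\le2$, a contradiction.

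The main obstacle is the type I case when the Lev reduction hands us a subgroup $L$ that is \emph{infinite}. This cannot happen, because \Cref{thm:lev-reduction} explicitly gives a finite $L$, and that finiteness is exactly what lets us invoke \Cref{lem:odd-uniform} directly, with no need for a minimality argument at all.

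One more point needs checking: that \Cref{thm:lev-reduction} applies even when $Q$ is infinite. It does, since it is stated for arbitrary non-trivial abelian groups, and $E\neq Q$ holds automatically because $E$ is finite while $Q$ is infinite. We should also handle the degenerate possibility $Q$ finite separately. In that case $Q$ has odd order and \Cref{lem:odd-uniform} applies immediately, so the essential work is confined to infinite $Q$, where the case analysis above closes the argument.
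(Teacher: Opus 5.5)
Your proposal is correct and follows the paper's proof essentially verbatim. You apply \Cref{thm:lev-reduction} to $(B,-B)$, dispose of $|C|=1$ via \Cref{lem:odd-uniform} inside the finite odd-order subgroup $L$, reuse the fibre argument for a uniquely represented quotient difference, and rule out type IV because $(Q/L)[2]=\{0\}$. The preliminary passage to $\langle B-B\rangle$, the minimal-counterexample framing on $|B|$, and the separate finite-$Q$ case are harmless but unnecessary, as you yourself note.
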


\begin{proof}
Assume $|B|\ge4$. Since $E$ is finite and aperiodic, $E\ne Q$ unless $Q$
is trivial, which is impossible. Apply \Cref{thm:lev-reduction} to
$(B,-B)$. We obtain a finite proper subgroup $L<Q$ such that, with
$\rho:Q\to Q/L$ and $C=\rho(B)$, the pair $(C,-C)$ is elementary.

If $|C|=1$, then, after translation, $B\subseteq L$. The finite subgroup
$L$ has odd order by \Cref{lem:no-two-quotient}. This contradicts
\Cref{lem:odd-uniform} applied inside $L$.

If $C-C$ has a non-zero uniquely represented quotient difference, the
fibre argument in the proof of \Cref{lem:odd-uniform} gives $|B|=2$, a
contradiction. Thus, by \Cref{lem:elementary-consequence}, only type IV
could remain.

By \Cref{lem:no-two-quotient}, the quotient $Q/L$ has no element of order
two. But in type IV the quotient group contains a finite subgroup of order
$2|C|$, and such a subgroup has an element of order two. This
contradiction proves the lemma.
\end{proof}

\subsection{Completion of the proof}\label{sec:completion}

\begin{proof}[Proof of \Cref{thm:main}]
Let
\[
    H=\Stab(D),
    \qquad \pi:G\to G/H,
    \qquad B=\pi(A).
\]
By \Cref{thm:kneser-reduction}, if $|B|=1$, then $D=H$ is a finite
subgroup. We may therefore assume that $D$ is not a subgroup. Then
$|B|>1$ and
\[
    \Stab(B-B)=\{0\},
    \qquad |B-B|=2|B|-1,
\]
with
\[
    r_B(\xi)=\frac{|B|}{2}
    \qquad(0\ne\xi\in B-B).
\]
Since $H$ is finite and $G[2]=\{0\}$, \Cref{lem:no-two-quotient} shows
that the quotient $G/H$ also has no element of order two. Hence
\Cref{lem:notwo-uniform} gives $|B|\le2$, and so $|B|=2$. By
\Cref{prop:two-fibres-full}, there exist $x,g\in G$ such that
\[
    A=(x+H)\cup(x+g+H).
\]
Then
\[
    D=A-A=(-g+H)\cup H\cup(g+H).
\]
If the image of $g$ in $G/H$ had order $1$, the two cosets in $A$ would
coincide. If it had order $2$, then $G/H$ would contain two-torsion. If it
had order $3$, then $(-g+H)\cup H\cup(g+H)$ would be the subgroup
generated by $g+H$, contrary to the present non-subgroup case. Therefore
the order is different from $1$, $2$, and $3$, the three cosets in
\eqref{eq:D-three-cosets} are distinct, and $D$ is not a subgroup.
\end{proof}

\begin{remark}
Conversely, if $H\le G$ is finite and
\[
    A=(x+H)\cup(x+g+H),
\]
where the image of $g$ in $G/H$ has order at least four, then
\[
    A-A=(-g+H)\cup H\cup(g+H),
\]
with three distinct cosets. Moreover, differences in $H$ have $|A|$
representations, while differences in $g+H$ and $-g+H$ have $|A|/2$
representations. Hence the non-subgroup alternative in \Cref{thm:main} is
attained exactly at the half threshold.
\end{remark}

\section{Counterexamples}\label{sec:counterexamples-remarks}

The two-torsion-free hypothesis in \Cref{thm:main} is essential. The
following construction gives an infinite family of counterexamples in characteristic two.

\begin{theorem}\label{thm:quadratic}
Let $r\ge1$, let $V=\F_2^{2r}$, and write an element of $V$ as
$(x_1,\ldots,x_r,y_1,\ldots,y_r)$. Define
\[
    q(x_1,\ldots,x_r,y_1,\ldots,y_r)=\sum_{i=1}^r x_i y_i\in\F_2.
\]
Let $G=V\times\F_2$ and
\[
    A=\{(x,q(x)):x\in V\}\subseteq G.
\]
Then $|A|=2^{2r}$ and
\[
    A-A=G\setminus\{(0,1)\}.
\]
Moreover,
\[
    r_A(0,0)=|A|,
    \qquad
    r_A(d)=\frac{|A|}{2}
    \quad\text{for every }d\in (A-A)\setminus\{(0,0)\}.
\]
Consequently \eqref{eq:half-popular} holds. However, $A-A$ is neither a
subgroup of $G$ nor a union of at most three cosets of any common subgroup
of $G$.
\end{theorem}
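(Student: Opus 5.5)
The plan is to compute $r_A$ directly through the polarization of $q$. For $u,v\in V$ we have
\[
    q(u+v)=q(u)+q(v)+\beta(u,v),
\]
where $\beta(u,v)=\sum_i (u_iv'_i+v_iu'_i)$ is the standard symplectic form on $V$; here $u=(u_1,\dots,u_r,u'_1,\dots,u'_r)$. The form $\beta$ is bilinear, alternating and non-degenerate. Since $\F_2$ has characteristic two, subtraction equals addition. Because $x\mapsto (x,q(x))$ is injective, $|A|=|V|=2^{2r}$.

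Fix $d=(v,c)\in G$. A pair $(a,a')=((x,q(x)),(x',q(x')))$ has $a-a'=d$ exactly when $x=x'+v$ and $q(x'+v)+q(x')=c$. The second condition expands to
\[
    \beta(v,x')=c+q(v).
\]
So $r_A(v,c)$ counts the $x'\in V$ satisfying this equation, and we split into two cases.

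\emph{Case $v=0$.} The left side vanishes and $q(0)=0$. Hence $r_A(0,0)=2^{2r}$ and $r_A(0,1)=0$.

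\emph{Case $v\ne0$.} By non-degeneracy, $x'\mapsto\beta(v,x')$ is a non-zero linear functional on $V$. Each of its two values is therefore taken on exactly $2^{2r-1}=|A|/2$ points, so $r_A(v,c)=|A|/2$ for both choices of $c$.

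These two cases give $A-A=G\setminus\{(0,1)\}$ together with the stated representation counts. In particular \eqref{eq:half-popular} holds.

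It remains to show that $D=A-A$ is neither a subgroup nor a union of at most three cosets of a common subgroup. This step needs a little care, but it is elementary. $D$ is not a subgroup because $|D|=2^{2r+1}-1$ is odd while $|D|>1$; alternatively, $D$ is not closed under addition, since $(v,0)+(v,1)=(0,1)\notin D$ for $v\ne0$.

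Now suppose $D$ is a union of $k\le 3$ cosets of some subgroup $K\le G$. Then $|D|=k|K|$, and $|K|$ is a power of $2$. Since $|D|$ is odd, $|K|=1$, which forces $k=|D|=2^{2r+1}-1\ge 7$. This contradicts $k\le3$.

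I expect no real obstacle. The only point to state carefully is the non-degeneracy of $\beta$, which guarantees that a non-zero functional splits $V$ evenly.
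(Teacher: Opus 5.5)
Your proof is correct and follows essentially the same route as the paper. The paper writes out the derivative $q(x+u)+q(x)$ coordinatewise, and your polarization identity with the symplectic form $\beta$ is that same computation stated invariantly; it then uses that a non-zero linear functional is balanced and rules out subgroup/coset structure by parity of $|A-A|=2^{2r+1}-1$, exactly as you do (just note that in $|D|=k|K|$ the cosets are taken distinct after discarding repeats).
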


\begin{proof}
Since $G$ has characteristic two, subtraction and addition coincide. Write
\[
    u=(\alpha_1,\ldots,\alpha_r,\beta_1,\ldots,\beta_r)\in V,
    \qquad
    x=(x_1,\ldots,x_r,y_1,\ldots,y_r)\in V.
\]
The derivative of $q$ in the direction $u$ is
\[
\begin{aligned}
    q(x+u)+q(x)
    &=\sum_{i=1}^r (x_i+\alpha_i)(y_i+\beta_i)+\sum_{i=1}^r x_i y_i  \\
    &=\sum_{i=1}^r \alpha_i y_i+
      \sum_{i=1}^r \beta_i x_i+
      \sum_{i=1}^r \alpha_i\beta_i .
\end{aligned}
\]
If $u=0$, this derivative is identically zero. Hence
$r_A(0,0)=|V|=|A|$ and $r_A(0,1)=0$.

Assume now that $u\ne0$. The linear part
\[
    L_u(x)=\sum_{i=1}^r \alpha_i y_i+
           \sum_{i=1}^r \beta_i x_i
\]
is a non-zero linear functional on $V$. Therefore $L_u$ is balanced: it
takes each value in $\F_2$ exactly $2^{2r-1}$ times. Adding the constant
$\sum_i\alpha_i\beta_i$ preserves balancedness. Thus, for each
$\varepsilon\in\F_2$,
\[
    |\{x\in V:q(x+u)+q(x)=\varepsilon\}|=2^{2r-1}=|A|/2.
\]
Equivalently,
\[
    r_A(u,\varepsilon)=|A|/2
    \qquad (u\ne0,\ \varepsilon\in\F_2).
\]
It follows that the only element of $G$ not represented as a difference of
two elements of $A$ is $(0,1)$, and the asserted representation counts
follow.

It remains to check the coset conclusion. We have
$|A-A|=|G|-1=2^{2r+1}-1$. This number is not a power of two, so $A-A$ is
not a subgroup of the elementary two-group $G$. If $A-A$ were a union of
$m\le3$ cosets of a common subgroup $K\le G$, then, after deleting
repeated cosets, its cardinality would be $m|K|$, where $m\in\{1,2,3\}$
and $|K|$ is a power of two. Since $2^{2r+1}-1$ is odd and larger than
$3$, no such equality is possible. Hence $A-A$ is not a union of at most
three cosets of a common subgroup.
\end{proof}

\begin{example}\label{ex:smallest}
For $r=1$, the construction gives a four-point subset of $\F_2^3$. After
an invertible affine change of coordinates, it is
\[
    A=\{0,e_1,e_2,e_3\}.
\]
Then
\[
    A-A=\F_2^3\setminus\{e_1+e_2+e_3\},
\]
and every non-zero element of $A-A$ has exactly two representations.
\end{example}

\begin{figure}[t]
\centering
\begin{tikzpicture}[scale=0.95]
\coordinate (000) at (0,0);
\coordinate (100) at (2.2,0);
\coordinate (010) at (0,2.2);
\coordinate (110) at (2.2,2.2);
\coordinate (001) at (0.75,0.75);
\coordinate (101) at (2.95,0.75);
\coordinate (011) at (0.75,2.95);
\coordinate (111) at (2.95,2.95);
\foreach \a/\b in {000/100,000/010,100/110,010/110,001/101,001/011,101/111,011/111,000/001,100/101,010/011,110/111}
    \draw (\a) -- (\b);
\foreach \p/\lab in {000/$0$,100/$e_1$,010/$e_2$,001/$e_3$,110/$e_1+e_2$,101/$e_1+e_3$,011/$e_2+e_3$}
    \fill (\p) circle (2.2pt) node[below right=2pt] {\scriptsize \lab};
\draw[thick] (111) circle (3pt) node[above right=2pt] {\scriptsize $e_1+e_2+e_3$};
\node[align=center] at (1.45,-0.75) {filled points: $A-A$; open point: omitted difference};
\end{tikzpicture}
\caption{The smallest two-torsion obstruction in $\F_2^3$. The difference set is the
cube with one vertex removed, which cannot be a subgroup or three cosets of a common
subgroup.}
\label{fig:cube-counterexample}
\end{figure}

\begin{corollary}\label{cor:lifting}
Let $H$ be a finite abelian group and let $B\subseteq G_0=\F_2^{2r+1}$ be
one of the sets constructed in \Cref{thm:quadratic}. Put
$G=H\times G_0$ and $A=H\times B$. Then
\[
    r_A(d)\ge \frac{|A|}{2}\qquad\text{for every }d\in A-A,
\]
but $A-A$ is neither a subgroup nor a union of at most three cosets of a
common subgroup of $G$.
\end{corollary}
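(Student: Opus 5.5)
The plan is to compute $r_A$ on $G=H\times G_0$ directly in terms of $r_B$, and then to settle the coset claim by a parity count.

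\textbf{Step 1: representation counts.} For $d=(h,u)\in G$, a pair $((h_1,b_1),(h_2,b_2))\in A^2$ has difference $d$ exactly when $h_1-h_2=h$ and $b_1-b_2=u$. The first coordinate contributes $|H|$ choices for any $h$, so
\[
    r_A(h,u)=|H|\,r_B(u).
\]
In particular $A-A=H\times(B-B)$. Moreover, for $d\in A-A$ we have $r_B(u)\ge|B|/2$ by \Cref{thm:quadratic}, hence
\[
    r_A(d)\ge |H||B|/2=|A|/2,
\]
which gives the half-threshold condition.

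\textbf{Step 2: the coset claim.} By \Cref{thm:quadratic}, $B-B=G_0\setminus\{t\}$ with $t=(0,1)$, so
\[
    |A-A|=|H|(2^{2r+1}-1).
\]
Suppose $A-A$ were a union of $m\le3$ distinct cosets of some subgroup $K\le G$. Then $K\subseteq\Stab(A-A)$, because a union of $K$-cosets is $K$-periodic. The complement $G\setminus(A-A)=H\times\{t\}$ is then also $K$-periodic and nonempty. It is therefore a union of $K$-cosets, so for $h\in H$ we get
\[
    (h,t)+K\subseteq H\times\{t\},
\]
which forces $K\le H\times\{0\}$.

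It follows that $\pi_{G_0}(A-A)=G_0\setminus\{t\}$ is a union of at most $m$ points, since each coset of $K$ projects to a single point of $G_0$. This gives $2^{2r+1}-1\le3$, which is false for $r\ge1$. The subgroup case is covered by $m=1$; alternatively, $A-A$ is not a subgroup because it omits $(0,t)$ while containing elements whose sum would produce it. The simplest route is to note that a subgroup $S$ would have stabilizer $S$, forcing $S\le H\times\{0\}$, which contradicts $|S|=|A-A|>|H|$.

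\textbf{Main obstacle.} The only delicate point is handling an arbitrary common subgroup $K$, rather than just cardinality. The earlier proof in \Cref{thm:quadratic} used that $|K|$ is a power of two, and that fails here since $H$ is arbitrary. I expect the stabilizer/complement argument above to resolve it cleanly: it confines $K$ to the $H$-fibre, after which counting the projected points in $G_0$ gives the contradiction.
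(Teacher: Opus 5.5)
Your proof is correct. Step 1 is exactly the paper's factorization $r_A(h,u)=|H|\,r_B(u)$, but Step 2 takes a different route from the paper.

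The paper pushes a hypothetical decomposition forward along the projection $\phi:G\to G/(H\times\{0\})\cong G_0$. The image of a coset $x+K$ is the coset $\phi(x)+\phi(K)$, so $B-B=\phi(A-A)$ would be a union of at most three cosets of $\phi(K)\le G_0$. \Cref{thm:quadratic} already excludes this. So the difficulty you flag as the main obstacle, that $|K|$ need not be a power of two when $H$ is arbitrary, never arises there. The power-of-two count is applied to $\phi(K)$ inside the elementary $2$-group $G_0$.

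Your argument instead uses the $K$-periodicity of the complement $H\times\{t\}$ to force $K\le H\times\{0\}$, and then counts projected points. This needs only the identity $B-B=G_0\setminus\{t\}$, not the coset part of \Cref{thm:quadratic}. It also proves more: it shows $\Stab(A-A)=H\times\{0\}$, so $A-A$ cannot be a union of fewer than $2^{2r+1}-1$ cosets of any subgroup.

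The paper's route is shorter and reuses the earlier theorem; yours is self-contained and sharper. Your $m=1$ case already covers the subgroup alternative, so the extra remarks about subgroups are unnecessary.
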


\begin{proof}
For $d=(h,g)\in H\times G_0$, the representation function factors as
\[
    r_A(h,g)=|H|\,r_B(g).
\]
Thus the popularity assertion follows from \Cref{thm:quadratic}. Also
$A-A=H\times(B-B)$. If $A-A$ were a union of at most three cosets of a
common subgroup of $G$, then its image under the quotient map
$G\to G/H\cong G_0$ would express $B-B$ as a union of at most three cosets
of a common subgroup of $G_0$, contradicting \Cref{thm:quadratic}.
\end{proof}

\end{document}